\newcommand{\argmin}{\operatornamewithlimits{arg\,min}}
\theoremstyle{plain}
\newtheorem{thm}{Theorem}
\newtheorem{lem}{Lemma}
\theoremstyle{definition}
\newtheorem{rmk}{Remark}
\newtheorem{ass}{Assumption}
\author{
	Clemens Kirisits\footnote{Johann Radon Institute for Computational and Applied Mathematics (RICAM),
Austrian Academy of Sciences, Linz, Austria},
	Otmar Scherzer\footnote{Computational Science Center, University of Vienna, and Johann Radon Institute for Computational and Applied Mathematics (RICAM), Austrian Academy of Sciences, Linz, Austria}}
\date{\today}
\title{A range condition for polyconvex variational regularization}
\begin{document}
\maketitle

\begin{abstract}
In the context of \emph{convex} variational regularization it is a known result that, under suitable differentiability assumptions, source conditions in the form of variational inequalities imply range conditions, while the converse implication only holds under an additional restriction on the operator. In this article we prove the analogous result for \emph{polyconvex} regularization. More precisely, we show that the variational inequality derived in \cite{KirSch17} implies that the derivative of the regularization functional must lie in the range of the dual-adjoint of the derivative of the operator. In addition, we show how to adapt the restriction on the operator in order to obtain the converse implication.
\end{abstract}

\section{Introduction}\label{sec:intro}
Consider a nonlinear operator equation with inexact data
$$ K(u) = v^\delta, \quad \|v^\delta - v^\dagger\| \le \delta,$$
where $K:U\to V$ acts between Banach spaces, $v^\dagger,v^\delta \in V$ are exact and noisy data, respectively, and $\delta>0$ is the noise level. A common method for the stable inversion of $K$ is variational regularization which consists in computing regularized solutions $u_\alpha^\delta$ as minimizers of functionals of the form
	\begin{equation}\label{eq:T}
		u \mapsto \mathcal{T}_\alpha (u;v^\delta) = \|K(u) - v^\delta\|^q + \alpha\mathcal{R}(u).
	\end{equation}
Here $\mathcal{R}$ is a typically convex regularization functional, $\alpha>0$ and $q\ge 1.$ A natural requirement for such methods is that regularized solutions converge, in some sense, to an exact solution as the noise level tends to zero. Convergence rates additionally provide bounds on the discrepancy between regularized and exact solutions in terms of the noise level. In a Banach space setting the most common measure of discrepancy is the Bregman distance associated to $\mathcal{R}$ \cite{BurOsh04}.

In order to guarantee convergence rates one has to impose a source condition of some sort. Traditionally, in a linear Hilbert space setting with quadratic Tikhonov regularization, this was done by assuming that the minimum norm solution lies in the range of an operator closely related to the adjoint of $K$. See \cite[Ch.~5]{EngHanNeu96} for example. Generalizing this range condition to the nonlinear Banach space setting outlined in the previous paragraph yields
	\begin{equation}\label{eq:inclusion}
		\mathcal{R}'(u^\dagger) \in \mathrm{ran}\,K'(u^\dagger)^\#,
	\end{equation}
where $u^\dagger$ is an $\mathcal{R}$-minimizing solution and $K'(u^\dagger)^\#$ is the dual-adjoint of the G\^ateaux derivative of $K$ at $u^\dagger$.

More recently, it was shown in \cite{HofKalPoeSch07} that convergence rates can also be obtained by assuming that a variational inequality like
	\begin{equation}\label{eq:inequality}
		\langle u^*, u^\dagger - u \rangle \le \beta_1 D_{u^*}(u;u^\dagger) + \beta_2 \| K(u) - v^\dagger\|
	\end{equation}
holds for all $u$ in a certain neighbourhood of $u^\dagger$. Here $u^*$ is a subgradient of $\mathcal{R}$ at $u^\dagger$ and $D_{u^*}(u;u^\dagger)$ denotes the corresponding Bregman distance between $u$ and $u^\dagger$. Note that \eqref{eq:inequality} does not require $K$ or $\mathcal{R}$ to be differentiable. If they are, however, then the variational inequality \eqref{eq:inequality} implies the range condition \eqref{eq:inclusion}. The converse implication only holds under an additional assumption on the nonlinearity of the operator $K$. For a more detailed discussion on the relations between the various types of source conditions we refer to \cite[pp.~70--73]{SchGraGroHalLen09}.

For certain inverse problems on $W^{1,p}(\Omega,\mathbb{R}^N)$, such as image or shape registration models inspired by nonlinear elasticity \cite{BurModRut13,IglRumSch17}, convex regularization is too restrictive, while the weaker notion of polyconvexity is more appropriate. Indeed, nonconvex regularization functionals $\mathcal{R}$ with polyconvex integrands are well-suited for deriving stable and convergent regularization schemes. However, since such functionals are not subdifferentiable in general, the question is how to obtain convergence rates. In \cite{KirSch17} we addressed this issue by following Grasmair's approach of generalized Bregman distances \cite{Gra10b}. First, we introduced the weaker concept of $W_{\mathrm{poly}}$-subdifferentiability, specifically designed for functionals with polyconvex integrands, and gave conditions for existence of $W_{\mathrm{poly}}$-subgradients. By means of the corresponding $W_{\mathrm{poly}}$-Bregman distance we were then able to translate the convergence rates result of \cite{HofKalPoeSch07} to the polyconvex setting. The source condition derived in \cite{KirSch17} reads
	\begin{equation}\label{eq:newinequality}
		w(u^\dagger) - w(u) \le \beta_1 D^{\mathrm{poly}}_w(u;u^\dagger) + \beta_2 \| K(u) - v^\dagger\|,
	\end{equation}
where $w$ is a $W_\mathrm{poly}$-subgradient of $\mathcal{R}$ at $u^\dagger$ and $D^{\mathrm{poly}}_w(u;u^\dagger)$ is the corresponding generalized Bregman distance.

The main results of the present paper are Theorems \ref{thm:rancon} and \ref{thm:converse} in Section \ref{sec:range}. Theorem \ref{thm:rancon} states that the variational inequality \eqref{eq:newinequality} implies the range condition \eqref{eq:inclusion}, given that $K$ and $\mathcal{R}$ are differentiable and $\mathcal{R}$ satisfies the conditions guaranteeing existence of a $W_\mathrm{poly}$-subgradient. A major part of the proof consists in showing that $\mathcal{R}'(u^\dagger) = w'(u^\dagger)$ in this case. Conversely, Theorem \ref{thm:converse} states that
$$w'(u^\dagger) \in \mathrm{ran}\,K'(u^\dagger)^\#$$
implies \eqref{eq:newinequality}, if the nonlinearities of $K$ and $w$ satisfy a certain inequality around $u^\dagger$. 

\section{Polyconvex functions and generalized Bregman distances} \label{sec:prelim}
This section is a brief summary of the most important prerequisites from \cite{KirSch17}. For $N,n \in \mathbb{N}$ we will frequently identify matrices in $\mathbb{R}^{N\times n}$ with vectors in $\mathbb{R}^{Nn}$.

\paragraph{Polyconvex functions.}  A function $f:\mathbb{R}^{N\times n} \to \mathbb{R} \cup \{+\infty\}$ is \emph{polyconvex}, if, for every $A \in \mathbb{R}^{N\times n}$, $f(A)$ can be written as a convex function of the minors of $A$. More precisely, let $1\le s \le \min(N,n) \eqqcolon N \wedge n$ and define $\sigma (s) \coloneqq \big( \begin{smallmatrix} n \\ s \end{smallmatrix} \big) \big( \begin{smallmatrix} N \\ s \end{smallmatrix} \big)$ as well as $\tau \coloneqq \sum_{s=1}^{N \wedge n} \sigma(s)$. Denote by $\mathrm{adj}_s A \in \mathbb{R}^{\sigma(s)}$ the matrix of all $s\times s$ minors of $A$ and set
	$$ T(A) \coloneqq (A,\mathrm{adj}_2 A,\ldots,\mathrm{adj}_{N\wedge n} A) \in \mathbb{R}^\tau.$$
Now, a function $f:\mathbb{R}^{N\times n} \to \mathbb{R} \cup \{+\infty\}$ is polyconvex, if there is a convex function $F:\mathbb{R}^\tau \to \mathbb{R} \cup \{+\infty\}$ such that $f = F \circ T$. Every convex function is polyconvex. The converse statement only holds, if $N \wedge n = 1.$ The importance of polyconvex functions in the calculus of variations is due to the fact that they render functionals of the form
$$\mathcal{R}(u) = \int_\Omega f( \nabla u(x))\, dx$$
weakly lower semicontinuous in $W^{1,p} (\Omega, \mathbb{R}^N)$, where $\Omega \subset \mathbb{R}^n$. For more details on polyconvex functions see \cite{Bal77, Dac08}.

\paragraph{The set $W_{\mathrm{poly}}$.} For the remainder of this article, unless stated otherwise, we let $\Omega \subset \mathbb{R}^n$ be an open set, $p\ge N \wedge n$, and set $U=W^{1,p}(\Omega,\mathbb{R}^N)$.

The following variant of the map $T$ will prove useful. Set $\tau_2 \coloneqq \sum_{s=2}^{N \wedge n} \sigma(s)$ and define
	$$ T_2(A) \coloneqq (\mathrm{adj}_2 A,\ldots,\mathrm{adj}_{N\wedge n} A) \in \mathbb{R}^{\tau_2}.$$
If $u \in U$, then $\mathrm{adj}_s \nabla u$ consists of sums of products of $s$ $L^p(\Omega)$ functions, and therefore, by H\"older's inequality, $\mathrm{adj}_s \nabla u \in L^{p/s}(\Omega,\mathbb{R}^{\sigma(s)})$. This motivates the following two defintions
$$ S \coloneqq \prod_{s=1}^{N\wedge n} L^{\frac{p}{s}} (\Omega, \mathbb{R}^{\sigma(s)}),\qquad S_2 \coloneqq \prod_{s=2}^{N\wedge n} L^{\frac{p}{s}} (\Omega, \mathbb{R}^{\sigma(s)}).$$
We define $W_{\mathrm{poly}}$ to be the set of all functions $w : U \to \mathbb{R}$ for which there is a pair $(u^*, v^*) \in U^* \times S_2^*$ such that
\begin{equation}\label{eq:waction}
	w(u) = \langle u^*,u\rangle_{U^*,U} + \langle v^*, T_2(\nabla u) \rangle_{S_2^*,S_2}
\end{equation}
for all $u\in U$. Note that, if $v^* = 0$, then $w$ can be identified with $u^* \in U^*$. Thus, the dual $U^*$ can be regarded a subset of $W_{\mathrm{poly}}$ in a natural way.

\paragraph{Generalized subgradients.}
Let $\mathcal{R}:U \to \mathbb{R} \cup \{+\infty\}$. We denote the effective domain of $\mathcal{R}$ by $\mathrm{dom}\,\mathcal{R} = \{u\in U: \mathcal{R}(u) < +\infty \}$. Following \cite{Gra10b,KirSch17,Sin97} we define the $W_{\mathrm{poly}}$\emph{-subdifferential} of $\mathcal{R}$ at $u \in \mathrm{dom}\,\mathcal{R}$ as 
	$$\partial_{\mathrm{poly}}\mathcal{R}(u) =  \{w \in W_{\!\mathrm{poly}} : \mathcal{R}(v) \ge \mathcal{R}(u) + w(v) - w(u) \text{ for all } v \in U\},$$
If $\mathcal{R}(u) = +\infty$, we set $\partial_{\mathrm{poly}}\mathcal{R}(u) = \emptyset$. The identification of $U^*$ with a subset of $W_{\mathrm{poly}}$ mentioned in the previous paragraph implies that $\partial \mathcal{R}(u) \subset \partial_{\mathrm{poly}}\mathcal{R}(u)$, that is, the classical subdifferential can be regarded a subset of the $W_{\mathrm{poly}}$-subdifferential. Elements of $\partial_{\mathrm{poly}}\mathcal{R}(u)$ are called $W_{\mathrm{poly}}$\emph{-subgradients} of $\mathcal{R}$ at $u$. Concerning existence of $W_{\mathrm{poly}}$-subgradients we have shown the following result in \cite{KirSch17}.
\begin{lem} \label{thm:wsubdif}
Let
	$$F:\Omega \times \mathbb{R}^N \times \mathbb{R}^{\tau} \to \mathbb{R}_{\ge 0} \cup \{+ \infty\}$$
be a Carath\'eodory function. Assume that, for almost every $x\in \Omega$, the map $(u,\xi) \mapsto F(x,u,\xi)$ is convex and differentiable throughout its effective domain and denote its derivative by $F'_{u,\xi}$. Let $p\in[1,\infty)$ and define the following functional on $U = W^{1,p}(\Omega,\mathbb{R}^N)$
	$$ \mathcal{R}(u) = \int_\Omega F(x,u(x),T(\nabla u(x))) \, dx.$$
If $\mathcal{R}(\bar{v}) \in \mathbb{R}$ and the function $x \mapsto F'_{u,\xi} (x,\bar{v}(x),T(\nabla \bar{v}(x)))$ lies in $L^{p^*}(\Omega,\mathbb{R}^N) \times S^*$, where $p^*$ denotes the H\"older conjugate of $p$, then this function is a $W_{\mathrm{poly}}$-subgradient of $\mathcal{R}$ at $\bar v$.
\end{lem}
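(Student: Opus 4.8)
The plan is to exhibit the functional $w\in W_{\mathrm{poly}}$ induced, in the natural way, by the map $g(x):=F'_{u,\xi}(x,\bar v(x),T(\nabla\bar v(x)))$ and to check directly that $w$ satisfies the defining inequality of $\partial_{\mathrm{poly}}\mathcal{R}(\bar v)$. This breaks into two parts: (i) showing that $w$ is well defined and lies in $W_{\mathrm{poly}}$, and (ii) establishing the subgradient inequality pointwise in $x$ and integrating.

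For (i), I would split $g$ according to the block decomposition $\mathbb{R}^N\times\mathbb{R}^\tau=\mathbb{R}^N\times\mathbb{R}^{\sigma(1)}\times\mathbb{R}^{\tau_2}$, writing $g=(g_0,g_1,g_2)$, where $g_0$ is the $u$-derivative of $F$, $g_1$ the derivative in the block of first-order minors (the matrix block itself), and $g_2$ the derivative in the block of higher-order minors. By hypothesis $g_0\in L^{p^*}(\Omega,\mathbb{R}^N)$, $g_1\in L^{p^*}(\Omega,\mathbb{R}^{\sigma(1)})$ and $g_2\in S_2^*$. H\"older's inequality then shows that $u\mapsto\int_\Omega\big(g_0\cdot u+g_1:\nabla u\big)\,dx$ is a bounded linear functional $u^*\in U^*$, so that with $v^*:=g_2$ the pair $(u^*,v^*)\in U^*\times S_2^*$ represents, via \eqref{eq:waction}, the functional
$$ w(u)=\int_\Omega F'_{u,\xi}\big(x,\bar v(x),T(\nabla\bar v(x))\big)\cdot\big(u(x),T(\nabla u(x))\big)\,dx, $$
and hence $w\in W_{\mathrm{poly}}$. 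Here one uses, as recalled above, that $T_2(\nabla u)\in S_2$ for every $u\in U$, which follows from H\"older's inequality and $p\ge N\wedge n$.

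For (ii), I would first note that $\mathcal{R}(\bar v)<\infty$ together with $F\ge 0$ forces $z_0(x):=(\bar v(x),T(\nabla\bar v(x)))$ to lie in the effective domain of the convex function $F(x,\cdot,\cdot)$ for almost every $x\in\Omega$, so by assumption $F(x,\cdot,\cdot)$ is differentiable at $z_0(x)$. Invoking the elementary fact that the gradient of a convex function at a point of differentiability is a subgradient there, one gets, for a.e.\ $x\in\Omega$ and every $v\in U$,
$$ F\big(x,v(x),T(\nabla v(x))\big)\ge F\big(x,z_0(x)\big)+F'_{u,\xi}\big(x,z_0(x)\big)\cdot\big((v(x),T(\nabla v(x)))-z_0(x)\big), $$
the inequality being trivial at points $x$ where its left-hand side equals $+\infty$. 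If $\mathcal{R}(v)=+\infty$ the asserted inequality $\mathcal{R}(v)\ge\mathcal{R}(\bar v)+w(v)-w(\bar v)$ holds since its right-hand side is finite; otherwise the correction term on the right of the pointwise inequality belongs to $L^1(\Omega)$ — it is a finite sum of pointwise products of functions in mutually conjugate Lebesgue spaces $L^{p/s}$ and $L^{(p/s)^*}$, $1\le s\le N\wedge n$ (the case $s=1$ using $W^{1,p}(\Omega,\mathbb{R}^N)\subset L^p(\Omega,\mathbb{R}^N)$) — so integrating the pointwise inequality over $\Omega$ and identifying the integral of the correction term with $w(v)-w(\bar v)$ yields exactly $\mathcal{R}(v)\ge\mathcal{R}(\bar v)+w(v)-w(\bar v)$.

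The routine ingredients are the measurability of $x\mapsto g(x)$ (already presupposed by the hypothesis that it belongs to $L^{p^*}\times S^*$), the embedding $W^{1,p}\subset L^p$, and the H\"older bookkeeping in step (i). The one step deserving care is the passage from the pointwise subgradient inequality to its integrated form, where one must check that the linear correction term is genuinely in $L^1(\Omega)$ and that the nonnegative, possibly $+\infty$-valued integrand $F(x,v(x),T(\nabla v(x)))$ may be integrated using only monotonicity of the Lebesgue integral. I do not anticipate a serious obstruction: the statement is the polyconvex counterpart of the classical fact that the pointwise derivative of a convex integrand defines a subgradient of the associated integral functional, and the $W_{\mathrm{poly}}$ framework was designed precisely so that this argument goes through.
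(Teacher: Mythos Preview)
Your argument is correct and is the natural one. Note, however, that the paper does not actually supply a proof of this lemma: it is quoted from \cite{KirSch17}, so there is no ``paper's own proof'' to compare against in detail. That said, your step~(i) matches exactly the structure spelled out in Remark~\ref{rmk:subgrad}, where the paper explains how the subgradient $F'_{u,\xi}(\cdot,\bar v,T(\nabla\bar v))$ is identified with a pair $(u^*,v^*)\in U^*\times S_2^*$ via the splitting $F'_{u,\xi}=(F'_u,F'_{\xi_1},F'_{\xi_2})$; and your step~(ii), the pointwise subgradient inequality for the convex map $(u,\xi)\mapsto F(x,u,\xi)$ followed by integration, is precisely the argument one expects (and the one given in \cite{KirSch17}). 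One minor remark: the $L^1$-integrability of the correction term in (ii) does not actually depend on whether $\mathcal{R}(v)$ is finite, so your case distinction is harmless but unnecessary---the inequality $\int F(x,v,T(\nabla v))\,dx\ge\int(\text{RHS})\,dx$ holds directly by monotonicity of the integral, the left side possibly being $+\infty$.
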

\begin{rmk}\label{rmk:subgrad}
If $F'_{u,\xi} (\cdot,\bar{v}(\cdot),T(\nabla \bar{v}(\cdot)))$ is a $W_{\mathrm{poly}}$-subgradient $w \in \partial_\mathrm{poly} \mathcal{R}(\bar v) \subset W_\mathrm{poly}$, as postulated by Lemma \ref{thm:wsubdif}, then it must be possible to write its action on $u\in U$ in terms of a pair $(u^*, v^*) \in U^* \times S_2^*$ as in \eqref{eq:waction}. In order to do so recall that $T(A) = (A,T_2(A))$. We can split the variable $\xi \in \mathbb{R}^\tau$ accordingly into $(\xi_1, \xi_2) \in \mathbb{R}^{Nn} \times \mathbb{R}^{\tau_2}.$ Similarly, we can write $F'_{u,\xi} = (F'_u, F'_{\xi}) = (F'_u, F'_{\xi_1}, F'_{\xi_2}).$ Now we have
\begin{align*}
	w(u)	&= \int_{\Omega} F'_{u,\xi} (x,\bar{v}(x),T(\nabla \bar{v}(x))) \cdot (u,T(\nabla u)) \, dx \\
			&= \int_{\Omega} F'_{u} (x,\bar{v}(x),T(\nabla \bar{v}(x))) \cdot u(x) \, dx \\
			&\quad {}+ \int_{\Omega} F'_{\xi_1} (x,\bar{v}(x),T(\nabla \bar{v}(x))) \cdot \nabla u(x) \, dx \\
			&\quad {}+ \int_{\Omega} F'_{\xi_2} (x,\bar{v}(x),T(\nabla \bar{v}(x))) \cdot T_2(\nabla u(x)) \, dx.
\end{align*}
The integral in the bottom line corresponds to the dual pairing $\langle v^*, T_2(\nabla u) \rangle_{S_2^*,S_2}$ in \eqref{eq:waction}, while the previous two terms correspond to $\langle u^*,u\rangle_{U^*,U}$. Therefore, $u^*$ is given by $(F'_u, F'_{\xi_1})$ and $v^*$ by $F'_{\xi_2}$. Also note that all integrals are well-defined and finite because of the integrability conditions on the derivative of $F$ in Lemma \ref{thm:wsubdif}.
\end{rmk}

\paragraph{Generalized Bregman distances.}
Whenever $\mathcal{R}$ has a $W_{\mathrm{poly}}$-subgradient $w \in \partial_{\mathrm{poly}}\mathcal{R}(u)$ we can define the associated $W_{\mathrm{poly}}$\emph{-Bregman distance} between $v\in U$ and $u$ as
	$$ D^{\mathrm{poly}}_w (v;u) = \mathcal{R}(v) - \mathcal{R}(u) - w(v) + w(u).$$
Note that, just like the classical Bregman distance, the $W_{\mathrm{poly}}$-Bregman distance is nonnegative, satisfies $ D^{\mathrm{poly}}_w (u;u) = 0$ whenever defined, and is \emph{not} symmetric with respect to $u$ and $v$. In addition, if $w = (u^*,0) \in \mathcal{R}_\mathrm{poly}(u)$, then $u^* \in \partial\mathcal{R}(u)$ and the classical and $W_\mathrm{poly}$-Bregman distances coincide, that is,
$$D^{\mathrm{poly}}_w (v;u) = D_{u^*} (v;u).$$
See \cite{Gra10b,SchGraGroHalLen09} for more details on (generalized) Bregman distances.

In order to be able to quote the source condition from \cite{KirSch17} we need one more definition: We call $u^\dagger \in U$ an $\mathcal{R}$\emph{-minimizing solution}, if it solves the exact operator equation and minimizes $\mathcal{R}$ among all other exact solutions, that is,
$$ u^\dagger \in \argmin \big\{\mathcal{R}(u) : u\in U, K(u) = v^\dagger\big\}.$$
\begin{ass}\label{ass:wsourcecon}
Assume that $\mathcal{R}$ has a $W_{\mathrm{poly}}$-subgradient $w$ at an $\mathcal{R}$-minimizing solution $u^\dagger$ and that there are constants $\beta_1 \in [0,1)$, $\beta_2, \bar\alpha > 0$ and $\rho > \bar\alpha\mathcal{R}(u^\dagger)$ such that
	\begin{equation}\label{eq:wsourcecon}
		w(u^\dagger) - w(u) \le \beta_1 D^{\mathrm{poly}}_w(u;u^\dagger) + \beta_2 \| K(u) - v^\dagger\|
	\end{equation}
	holds for all $u$ with $\mathcal{T}_{\bar \alpha} (u;v^\dagger) \le \rho$.
\end{ass}

\section{A range condition}\label{sec:range}

At the end of this section we prove our main results, Theorems \ref{thm:rancon} and \ref{thm:converse}. Before that we have to state a few preliminary results. First, we recall the definition of the dual-adjoint operator together with a characterization of its range (Lemma \ref{thm:dualadjointrange}). Next, we compute the G\^ateaux derivative of
$$ \mathcal{R}(u) = \int_\Omega f(x,u(x),\nabla u(x)) \, dx$$
in Lemma \ref{thm:Rprime}, and of $w\in W_\mathrm{poly}$ in Lemma \ref{thm:wprime}, respectively.

For every bounded linear operator $A : U \to V$ acting between locally convex spaces there exists a unique operator $A^\#:V^* \to U^*$, also linear and bounded, satisfying
	$$ \langle A^\#v^*,u \rangle_{U^*,U} = \langle v^*, Au \rangle_{V^*,V}$$
for all $u\in U$ and $v^* \in V^*.$ See, for instance, Section VII.1 of \cite{Yos65}. The operator $A^\#$ is called the \emph{dual-adjoint} of $A$.
\begin{lem}\label{thm:dualadjointrange}
Let $U,V$ be normed linear spaces, $A:U\to V$ a bounded linear operator and $u^*\in U^*$. Then $u^* \in \mathrm{ran}\,A^\#$, if and only if there is a $C>0$ such that
	$$ |\langle u^*,u \rangle_{U^*,U}| \le C \|Au\|$$
for all $u \in U$.
\end{lem}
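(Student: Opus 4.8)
The plan is to prove both implications directly from the defining relation of the dual-adjoint. For the ``only if'' direction, suppose $u^* = A^\# v^*$ for some $v^* \in V^*$. Then for every $u \in U$ we have $\langle u^*, u\rangle_{U^*,U} = \langle v^*, Au\rangle_{V^*,V}$, and the standard estimate $|\langle v^*, Au\rangle_{V^*,V}| \le \|v^*\|_{V^*} \|Au\|$ gives the claim with $C = \|v^*\|_{V^*}$. This direction is routine.

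For the ``if'' direction, assume there is a $C>0$ with $|\langle u^*,u\rangle_{U^*,U}| \le C\|Au\|$ for all $u\in U$. The idea is to define a linear functional on the subspace $\mathrm{ran}\,A \subset V$ and then extend it to all of $V$ by Hahn--Banach. Concretely, on $\mathrm{ran}\,A$ set $\phi(Au) \coloneqq \langle u^*, u\rangle_{U^*,U}$. The inequality $|\langle u^*,u\rangle_{U^*,U}| \le C\|Au\|$ shows simultaneously that $\phi$ is well defined (if $Au_1 = Au_2$, then $Au_1 - Au_2 = A(u_1-u_2) = 0$ forces $\langle u^*, u_1 - u_2\rangle = 0$) and that it is bounded on $\mathrm{ran}\,A$ with norm at most $C$. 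By the Hahn--Banach theorem $\phi$ extends to some $v^* \in V^*$ with $\langle v^*, y\rangle_{V^*,V} = \phi(y)$ for all $y \in \mathrm{ran}\,A$. In particular, for every $u\in U$,
$$ \langle A^\# v^*, u\rangle_{U^*,U} = \langle v^*, Au\rangle_{V^*,V} = \phi(Au) = \langle u^*, u\rangle_{U^*,U},$$
so $A^\# v^* = u^*$, i.e.\ $u^* \in \mathrm{ran}\,A^\#$.

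The only subtle point is well-definedness of $\phi$ on $\mathrm{ran}\,A$, which is exactly what the inequality buys us; everything else is a textbook application of Hahn--Banach, so I do not expect any real obstacle here. One should simply note that normed (rather than merely locally convex) spaces are assumed, so that Hahn--Banach in its norm-controlled form applies and the dual-adjoint from Section VII.1 of \cite{Yos65} is available.
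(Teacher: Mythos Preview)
Your proof is correct and is exactly the standard Hahn--Banach argument one would expect; the paper itself does not give a proof but simply refers to Lemma~8.21 in \cite{SchGraGroHalLen09}, where essentially this argument appears. The only cosmetic point is that taking $C = \|v^*\|_{V^*}$ in the ``only if'' direction gives $C=0$ when $u^*=0$, but in that trivial case any $C>0$ works anyway.
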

\begin{proof}
See Lemma 8.21 in \cite{SchGraGroHalLen09}.
\end{proof}

Let $ K: \mathcal{D}(K) \subset U \to V $ be a map acting between normed spaces and let $u \in \mathcal{D}(K)$, $h \in U.$ If the limit
	$$ K'(u;h) = \lim_{t\to 0^+} \frac{1}{t} \big( K(u+th) - K(u)\big) $$
exists in $V$, then $K'(u;h)$ is called the \emph{directional derivative} of $K$ at $u$ in direction $h$. If $K'(u;h)$ exists for all $h \in U$ and there is a bounded linear operator $K'(u):U \to V $ satisfying
	$$ K'(u)h = K'(u;h)$$
for all $h \in U$, then $K$ is \emph{G\^ateaux differentiable} at $u$ and $K'(u)$ is called the \emph{G\^ateaux derivative} of $K$ at $u$.

\begin{lem}\label{thm:Rprime}
Let
	$$ f:\Omega \times \mathbb{R}^N \times \mathbb{R}^{N\times n} \to \mathbb{R}_{\ge 0} \cup \{+\infty\}$$
be a nonnegative Carath\'eodory function. Assume that, for almost every $x\in \Omega$, the map $(u,A) \mapsto f(x,u,A)$ is differentiable throughout its effective domain and that
\begin{equation}\label{eq:factcond}
	|f'_{u,A}(x,u,A)| \le a(x) + b |u|^{p-1} + c |A|^{p-1}
\end{equation}
holds there for $p\ge 1$ and some $a\in L^{p^*}(\Omega)$ and $b,c\ge 0$. Then the functional
$$\mathcal{R}:U = W^{1,p}(\Omega, \mathbb{R}^N) \to \mathbb{R}_{\ge 0} \cup \{+\infty\},$$
defined by
$$ \mathcal{R}(u) = \int_\Omega f(x,u(x),\nabla u(x)) \, dx,$$
is G\^ateaux differentiable in the interior of its effective domain. Its G\^ateaux derivative at $u\in\mathrm{int\,dom}\,\mathcal{R}$ is given by
\begin{equation}\label{eq:Rprime}
\begin{aligned}
	\langle \mathcal{R}'(u), \hat u \rangle_{U^*,U}
		&= \int_\Omega f'_u(x,u(x),\nabla u(x)) \cdot \hat u(x)\,dx  \\
		&\quad  {}+ \int_\Omega f'_A(x,u(x),\nabla u(x)) \cdot \nabla \hat u(x) \,dx, \qquad \hat u \in U.
\end{aligned}
\end{equation}
\end{lem}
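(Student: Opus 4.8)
My plan is to differentiate $\mathcal{R}$ under the integral sign, so that the work reduces to justifying the interchange of a limit with the integral via the dominated convergence theorem. Fix $u\in\mathrm{int\,dom}\,\mathcal{R}$ and a direction $\hat u\in U$. First I would use that $u$ lies in the interior of $\mathrm{dom}\,\mathcal{R}$ to pick $t_0\in(0,1]$ with $\{u+s\hat u:|s|\le t_0\}$ contained in a norm ball inside $\mathrm{dom}\,\mathcal{R}$; from this I expect to extract that, for almost every $x\in\Omega$, the whole segment $s\mapsto(u(x)+s\hat u(x),\nabla u(x)+s\nabla\hat u(x))$, $|s|\le t_0$, lies in the effective domain of $f(x,\cdot,\cdot)$, so that the difference quotients of the integrand are finite there.

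Next, for such $x$ and $0<|t|\le t_0$, I would apply the one-dimensional mean value theorem to $g_x(s):=f(x,u(x)+s\hat u(x),\nabla u(x)+s\nabla\hat u(x))$ together with the chain rule to write
\[
	\frac{g_x(t)-g_x(0)}{t}=f'_u(x,\xi_\theta)\cdot\hat u(x)+f'_A(x,\xi_\theta)\cdot\nabla\hat u(x),\qquad\xi_\theta=(u(x)+\theta\hat u(x),\nabla u(x)+\theta\nabla\hat u(x)),
\]
for some $\theta=\theta(x,t)$ between $0$ and $t$. Differentiability of $f(x,\cdot,\cdot)$ gives the pointwise limit of the right-hand side as $t\to0$, namely the integrand of \eqref{eq:Rprime}. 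For the majorant I would use $|\theta|\le1$ and the growth condition \eqref{eq:factcond} to bound the displayed quantity in modulus by
\[
	\big(a(x)+b(|u(x)|+|\hat u(x)|)^{p-1}+c(|\nabla u(x)|+|\nabla\hat u(x)|)^{p-1}\big)\big(|\hat u(x)|+|\nabla\hat u(x)|\big),
\]
which does not depend on $t$; since $u,\hat u\in W^{1,p}(\Omega,\mathbb{R}^N)$, the factors $(|u|+|\hat u|)^{p-1}$ and $(|\nabla u|+|\nabla\hat u|)^{p-1}$ lie in $L^{p^*}(\Omega)$, $a\in L^{p^*}(\Omega)$ by hypothesis, and $|\hat u|,|\nabla\hat u|\in L^p(\Omega)$, so H\"older's inequality puts this majorant in $L^1(\Omega)$ (with the usual reading of the exponents when $p=1$). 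Dominated convergence then yields that $\lim_{t\to0}\tfrac1t(\mathcal{R}(u+t\hat u)-\mathcal{R}(u))$ exists and equals the right-hand side of \eqref{eq:Rprime}.

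To finish I would observe that this limit is linear in $\hat u$ by inspection of the formula, and bounded: the same H\"older argument, applied now to $f'_u(\cdot,u,\nabla u),f'_A(\cdot,u,\nabla u)\in L^{p^*}(\Omega)$ (finite again by \eqref{eq:factcond}, since $u,\nabla u\in L^p$) paired with $\hat u,\nabla\hat u\in L^p(\Omega)$, gives $|\langle\mathcal{R}'(u),\hat u\rangle_{U^*,U}|\le C\|\hat u\|_{W^{1,p}}$ with $C$ independent of $\hat u$; hence $\mathcal{R}'(u)\in U^*$ and $\mathcal{R}$ is G\^ateaux differentiable at $u$. I expect the one genuinely delicate point to be the very first step: guaranteeing that the difference quotients are a.e.\ finite and that the segments fed into the mean value theorem stay in the effective domain of $f(x,\cdot,\cdot)$, which is where the hypothesis $u\in\mathrm{int\,dom}\,\mathcal{R}$ must be used (together with a small measurability argument to upgrade ``for each $s$, almost every $x$'' to ``for almost every $x$, all admissible $s$''). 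Everything after that is the textbook ``differentiation under the integral'' scheme, with \eqref{eq:factcond} tailored precisely to produce the $t$-uniform $L^1$ majorant.
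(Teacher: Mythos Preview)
Your proposal is correct and follows essentially the same route as the paper: both arguments justify differentiation under the integral sign by producing, from the growth bound \eqref{eq:factcond} and H\"older's inequality, a $t$-uniform $L^1$ majorant for the difference quotients (respectively for $\partial_t g$). The paper packages this as the three hypotheses of a Leibniz-type theorem, whereas you unroll that theorem via the mean value theorem and dominated convergence; you are also slightly more explicit than the paper in verifying that the resulting functional is bounded linear, and in flagging the ``for each $s$, a.e.\ $x$'' versus ``a.e.\ $x$, all $s$'' issue, which the paper passes over.
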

\begin{proof}
Fix $u\in\mathrm{int\,dom}\,\mathcal{R}$ and $\hat u \in U$. Assuming we can differentiate under the integral sign we have
\begin{align*}
	\mathcal{R}'(u;\hat u)
		&= \lim_{t\to 0^+} \frac{1}{t} \big( \mathcal{R}(u+t\hat u) - \mathcal{R}(u)\big) \\
		&= \int_{\Omega} \lim_{t\to 0^+} \frac{1}{t} \big( f(x,u+t\hat u, \nabla u + t \nabla \hat u) - f(x,u,\nabla u)\big) \, dx \\
		&= \int_{\Omega} \partial_t f(x,u+t\hat u, \nabla u + t \nabla \hat u)\Big\vert_{t=0} \, dx \\
		&= \int_\Omega \left(f'_u(x,u,\nabla u) \cdot \hat u + f'_A(x,u,\nabla u) \cdot \nabla \hat u \right)\,dx,
\end{align*}
which is just \eqref{eq:Rprime}.

It remains to show that differentiation and integration are interchangeable. For $\epsilon >0$ sufficiently small (see below) we define $g:(-\epsilon,\epsilon)\times \Omega \to \mathbb{R}_{\ge 0} \cup \{+\infty\}$,
	$$g(t,x) = f(x,u(x)+t\hat u(x), \nabla u(x) + t \nabla \hat u(x)).$$
The identity $\partial_t \int_\Omega g(t,x)\, dx = \int_\Omega \partial_t g(t,x)\, dx$ holds true, if the following three conditions are satisfied.
\begin{enumerate}
	\item Integrability: The function $x\mapsto g(t,x)$ is integrable for all $t \in (-\epsilon,\epsilon)$.
	\item Differentiability: The partial derivative $\partial_t g(t,x)$ exists for almost every $x\in \Omega $ and all $t \in (-\epsilon,\epsilon)$.
	\item Uniform upper bound: There is a function $h\in L^1(\Omega)$ such that $|\partial_t g(t,x)| \le h(x)$ for almost every $x\in\Omega$ and all $t \in (-\epsilon,\epsilon)$.
\end{enumerate}
Item 1 is satisfied, since $u$ lies in the interior of $\mathrm{dom}\,\mathcal{R} $ and therefore
	$$ \int_\Omega |g(t,x)| \, dx = \mathcal{R}(u+t \hat u) < \infty, \qquad -\epsilon < t < \epsilon, $$
for $\epsilon$ sufficiently small. In particular, $g(t,x) < \infty$ for almost every $x$ and every $t\in (-\epsilon ,\epsilon)$.
Thus, item 2 holds as well. Concerning item 3, we use assumption \eqref{eq:factcond} to obtain for almost every $x \in \Omega$
\begin{align*}
	|\partial_t g(t,x)|
		&= |f'_u(x,u+t\hat u, \nabla u + t\nabla \hat u)\cdot \hat u + f'_A(x,u+t\hat u, \nabla u + t\nabla \hat u)\cdot \nabla \hat u| \\
		&\le |f'_u(x,u+t\hat u, \nabla u + t\nabla \hat u)| |\hat u| + |f'_A(x,u+t\hat u, \nabla u + t\nabla \hat u)| |\nabla \hat u| \\
		&\le (|\hat u| + |\nabla \hat u|)(a + b |u + t \hat u|^{p-1} + c |\nabla u + t \nabla \hat u|^{p-1}).
\end{align*}
We estimate further
$$ |u + t \hat u|^{p-1} \le \left( |u| + |t| |\hat u|\right)^{p-1} \le \max\{1,2^{p-2}\} \left( |u|^{p-1} + \epsilon^{p-1} |\hat u|^{p-1}\right)$$
and similarly
$$ |\nabla u + t \nabla \hat u|^{p-1} \le \max\{1,2^{p-2}\} \left( |\nabla u|^{p-1} + \epsilon^{p-1} |\nabla \hat u|^{p-1}\right).$$
Thus we have found an upper bound for $|\partial_t g(t,x)|$, which is independent of $t$. This bound is essentially a sum of products of the form $y(x)z(x)^{p-1}$, where $y,z \in L^p(\Omega)$. Since, in this case, $z^{p-1}$ lies in $L^{p^*}(\Omega)$, H\"older's inequality shows that $yz^{p-1} \in L^1(\Omega).$
\end{proof}

\begin{lem} \label{thm:wprime}
The functions $w\in W_{\mathrm{poly}}$ are G\^ateaux differentiable on all of $U$. Identifying $w$ with $(u^*,v^*) \in U^* \times S_2^*$ its G\^ateaux derivative at $u\in U$ is given by
$$\langle w'(u), \hat u \rangle_{U^*,U} = \langle u^*, \hat u \rangle_{U^*,U} + \int_{\Omega} v^*(x) \cdot T'_2(\nabla u(x)) \nabla \hat u(x) \, dx, \qquad \hat u \in U,$$
where $T'_2(\nabla u(x))$ denotes the derivative of the map $T_2:\mathbb{R}^{Nn} \to \mathbb{R}^{\tau_2}$ at $\nabla u(x)$.
\end{lem}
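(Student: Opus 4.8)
The plan is to exploit that $T_2$ is, componentwise, a polynomial map $\mathbb{R}^{Nn}\to\mathbb{R}^{\tau_2}$, so that the whole statement reduces to elementary multilinear algebra together with H\"older's inequality; there is no genuine analytic obstacle, and in particular (unlike the situation of Lemma~\ref{thm:Rprime}) no restriction to the interior of an effective domain is needed, since every $w\in W_{\mathrm{poly}}$ is real-valued and finite on all of $U$. Fix $w\in W_{\mathrm{poly}}$, identified with a pair $(u^*,v^*)\in U^*\times S_2^*$ via \eqref{eq:waction}, and fix $u,\hat u\in U$. Using \eqref{eq:waction} and linearity of $\langle u^*,\cdot\rangle$, the difference quotient splits as
\[
\frac{1}{t}\bigl(w(u+t\hat u)-w(u)\bigr)=\langle u^*,\hat u\rangle_{U^*,U}+\Bigl\langle v^*,\ \tfrac{1}{t}\bigl(T_2(\nabla u+t\nabla\hat u)-T_2(\nabla u)\bigr)\Bigr\rangle_{S_2^*,S_2},
\]
so everything hinges on the second term.

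Next I would expand $T_2$ by its exact (finite) Taylor formula: since each component of $T_2$ is a polynomial of degree at most $d:=N\wedge n$, one has $T_2(A+tB)=T_2(A)+t\,T_2'(A)B+\sum_{k=2}^{d}\tfrac{t^{k}}{k!}\,D^{k}T_2(A)[B,\dots,B]$ for all matrices $A,B$ and all $t$, whence, pointwise a.e.\ in $\Omega$,
\[
\frac{1}{t}\bigl(T_2(\nabla u+t\nabla\hat u)-T_2(\nabla u)\bigr)=T_2'(\nabla u)\nabla\hat u+\sum_{k=2}^{d}\frac{t^{k-1}}{k!}\,D^{k}T_2(\nabla u)[\nabla\hat u,\dots,\nabla\hat u].
\]
The key point is that every summand on the right belongs to $S_2$: the $s$-th block of $D^{k}T_2(A)[B,\dots,B]$ is a sum of products of $k$ entries of $B$ and $s-k$ entries of $A$ (and vanishes when $k>s$), so with $A=\nabla u$ and $B=\nabla\hat u$ both in $L^{p}$, H\"older's inequality (with $\tfrac{1}{p/s}=\sum_{i=1}^{s}\tfrac{1}{p}$) places it in $L^{p/s}(\Omega,\mathbb{R}^{\sigma(s)})$; the same reasoning gives $T_2'(\nabla u)\nabla\hat u\in S_2$. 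Because the terms with $k\ge 2$ carry a factor $t^{k-1}\to 0$, the difference quotient converges to $T_2'(\nabla u)\nabla\hat u$ in $S_2$ as $t\to 0^{+}$. Continuity of $v^*\in S_2^*$ then yields
\[
w'(u;\hat u)=\langle u^*,\hat u\rangle_{U^*,U}+\langle v^*,T_2'(\nabla u)\nabla\hat u\rangle_{S_2^*,S_2}=\langle u^*,\hat u\rangle_{U^*,U}+\int_{\Omega}v^*(x)\cdot T_2'(\nabla u(x))\nabla\hat u(x)\,dx,
\]
the last equality being just the integral representation of the $S_2$--$S_2^*$ pairing, valid since $p/s\ge 1$ and $\Omega$ is $\sigma$-finite.

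It then remains to check that $\hat u\mapsto w'(u;\hat u)$ is a bounded linear functional on $U$, so that it is indeed the G\^ateaux derivative in the sense defined above. Linearity is immediate since $T_2'(\nabla u(x))$ acts linearly on $\nabla\hat u(x)$ for each $x$. Boundedness follows from one further H\"older estimate: the $s$-block $D(\mathrm{adj}_s)(\nabla u)[\nabla\hat u]$ of $T_2'(\nabla u)\nabla\hat u$ is controlled in $L^{p/s}$ by $C\,\|\nabla u\|_{L^{p}}^{s-1}\|\nabla\hat u\|_{L^{p}}$, and pairing with $v^*$ gives a bound of the form $C(u,v^*)\,\|\hat u\|_{U}$. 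I expect the only mildly delicate point to be the bookkeeping with minors and their higher derivatives --- making precise that $D^{k}(\mathrm{adj}_s)(A)[B,\dots,B]$ is a sum of monomials of degree $k$ in $B$ and $s-k$ in $A$ --- but this is routine, and the single infinite-dimensional ingredient, namely exchanging the limit with the action of $v^*$, is handled by the $S_2$-convergence established above.
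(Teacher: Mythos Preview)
Your argument is correct and takes a genuinely different route from the paper. The paper proceeds by the standard ``differentiation under the integral sign'' scheme: it writes $g(t,x)=v^*(x)\cdot T_2(\nabla u(x)+t\nabla\hat u(x))$ and verifies the three hypotheses of the dominated convergence theorem (integrability in $x$, differentiability in $t$, and a $t$-independent $L^1$ majorant for $\partial_t g$), the last of these via expanding $\mathrm{adj}_s'(\nabla u+t\nabla\hat u)$ and applying H\"older. You instead use the exact polynomial Taylor expansion of $T_2$ to show directly that the $S_2$-valued difference quotient converges in the $S_2$-norm, and then pass the limit through the continuous functional $v^*$. Your approach avoids the pointwise dominated-convergence bookkeeping entirely and makes the passage to the limit a one-line consequence of continuity of $v^*$; it also explicitly verifies that $\hat u\mapsto w'(u;\hat u)$ is bounded on $U$, a point the paper leaves implicit. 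The paper's approach, on the other hand, is the more traditional template and does not require tracking which $L^{p/s}$-block each higher derivative lands in. Both arguments ultimately rest on the same H\"older estimates for products of $s$ gradient entries.
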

\begin{proof}
Identify $w\in W_{\!\mathrm{poly}}$ with $(u^*,v^*) \in U^* \times S_2^*$ and let $u,\hat u \in U.$ First, we separate the linear and nonlinear parts of $w$.
\begin{align*}
	w'(u;\hat u)
		&= \lim_{t\to 0^+} \frac{1}{t} \big( w(u+t\hat u) - w(u)\big) \\
		&= \lim_{t\to 0^+} \frac{1}{t} \big( \langle u^*,u+t\hat u \rangle_{U^*,U} + \langle v^*, T_2(\nabla u +t \nabla \hat u) \rangle_{S_2^*,S_2} \\
		&\qquad {} - \langle u^*,u \rangle_{U^*,U} - \langle v^*, T_2(\nabla u) \rangle_{S_2^*,S_2} \big) \\
		&= \langle u^*,\hat u \rangle_{U^*,U} + \lim_{t\to 0^+} \frac{1}{t} \langle v^*, T_2(\nabla u + t \nabla \hat u) - T_2(\nabla u) \rangle_{S_2^*,S_2} %\\
		%&= \langle u^*,\hat u \rangle_{U^*,U} + \langle v^*, \partial_t T_2(\nabla u + t \nabla \hat u) \big\vert_{t=0} \rangle_{S_2^*,S_2} \\
		%&= \langle u^*,\hat u \rangle_{U^*,U} + \langle v^*, \nabla T_2(\nabla u) \nabla \hat u \rangle_{S_2^*,S_2} \\
		%&= \langle u^*,\hat u \rangle_{U^*,U} + \langle v^* \nabla T_2(\nabla u), \nabla \hat u \rangle.
\end{align*}
Assuming we can differentiate under the integral sign, the remaining limit equals
\begin{align*}
	&\quad \lim_{t\to 0^+} \frac{1}{t} \langle v^*, T_2(\nabla u + t \nabla \hat u) - T_2(\nabla u) \rangle_{S_2^*,S_2} \\
	&= \int_{\Omega} \lim_{t\to 0^+} \frac{1}{t} \Big[ v^* \cdot \big(T_2 (\nabla u + t \nabla \hat u) - T_2(\nabla u) \big) \Big] \, dx \\
	&= \int_{\Omega} \partial_t \Big[ v^* \cdot T_2 (\nabla u + t \nabla \hat u) \Big]_{t=0} \, dx \\
	&= \int_{\Omega} v^* \cdot T'_2(\nabla u)\nabla \hat u \, dx.
\end{align*}
As in the proof of Lemma \ref{thm:Rprime} we have to check the conditions for interchanging integration and differentiation. Define the function
$$ g(t,x) =  v^*(x) \cdot T_2 (\nabla u(x) + t \nabla \hat u(x))$$
on $(-\epsilon,\epsilon) \times \Omega$. It is integrable for all $t$, since $T_2$ maps $L^p(\Omega, \mathbb{R}^{N\times n})$ into $S_2$ and $v^*$ lies in $S_2^*$. It is also differentiable with respect to $t$, since the entries of $T_2(\nabla u(x) + t \nabla \hat u(x))$ are polynomials in $t$. Finally, $\partial_t g$ can be bounded in the following way
\begin{align}
	|\partial_t g|
		&= \Big|\partial_t \sum_{s=2}^n v_s^* \cdot \mathrm{adj}_s (\nabla u + t \nabla \hat u) \Big| \notag \\
		&= \Big|\sum_{s=2}^n v_s^* \cdot \mathrm{adj}'_s (\nabla u + t \nabla \hat u)\nabla \hat u \Big| \notag \\
		&\le \big| \nabla \hat u\big| \sum_{s=2}^n \big| v_s^* \big| \big| \mathrm{adj}'_s (\nabla u + t \nabla \hat u) \big| \label{eq:upperbounddtg}
		%&\le \big| \nabla \hat u \big| \sum_{s=2}^n \big| v_s^* \big| \max_{-\epsilon\le t \le \epsilon} \big| \mathrm{adj}'_s (\nabla u + t \nabla \hat u) \big|, \label{eq:upperbounddtg}
\end{align}
where $v^*_s$ denotes the $L^{(\frac{p}{s})^*}(\Omega,\mathbb{R}^{\sigma(s)})$-component of $v^*.$ The derivative $\mathrm{adj}'_s (\nabla u + t \nabla \hat u)$ consists of sums of products of $s-1$ terms of the form $\partial_{x_i}u_j + t \partial_{x_i}\hat{u}_j$. After expanding, every such product can be bounded by
	\begin{equation}\label{eq:adjprimebound}
		\sum_{k=0}^{s-1} |t|^k \sum_m |g_{km}| \le \sum_{k=0}^{s-1} \epsilon^k \sum_m |g_{km}|,
	\end{equation}
where each $g_{km}$ is a product of $s-1$ $L^p$ functions and therefore lies in $L^{\frac{p}{s-1}}$. Combining \eqref{eq:upperbounddtg} with \eqref{eq:adjprimebound} gives an upper bound for $\partial_t g$ which is independent of $t$. Using H\"older's inequality it is now straightforward to verify that this bound is indeed an $L^1$ function.
\end{proof}

\begin{thm}\label{thm:rancon}
Let $\mathcal{R}$ satisfy the requirements of Lemma \ref{thm:wsubdif} at an $\mathcal{R}$-minimizing solution $u^\dagger\in \mathrm{int\,dom\,}\mathcal{R}$ and let $w$ be the $W_{\mathrm{poly}}$-subgradient thus provided. Suppose Assumption \ref{ass:wsourcecon} holds for this $u^\dagger$ and $w$. Moreover, assume that the integrand $f$ of $\mathcal{R}$ satisfies inequality \eqref{eq:factcond} and that $K$ is G\^ateaux differentiable at $u^\dagger$. Then $\mathcal{R}$ is G\^ateaux differentiable at $u^\dagger$ and
	$$\mathcal{R}'(u^\dagger) = w'(u^\dagger) \in \mathrm{ran}\, K'(u^\dagger)^\#.$$
\end{thm}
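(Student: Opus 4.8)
The plan is to split the statement into three parts and establish them in turn: (i) $\mathcal{R}$ is G\^ateaux differentiable at $u^\dagger$; (ii) $\mathcal{R}'(u^\dagger)=w'(u^\dagger)$; (iii) this common functional lies in $\mathrm{ran}\,K'(u^\dagger)^\#$. For (i), note that the integrand of $\mathcal{R}$ is $f(x,u,A)=F(x,u,T(A))$ with $F$ as in Lemma~\ref{thm:wsubdif} and $T$ a polynomial map; hence $f$ is a nonnegative Carath\'eodory function, differentiable in $(u,A)$ throughout its effective domain, with $f'_u=F'_u$ and, by the chain rule, $f'_A=F'_{\xi_1}+F'_{\xi_2}\,T'_2(A)$, all evaluated at $(x,u,T(A))$. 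Since $f'_{u,A}$ obeys the assumed growth bound \eqref{eq:factcond} and $u^\dagger\in\mathrm{int\,dom}\,\mathcal{R}$, Lemma~\ref{thm:Rprime} applies and yields G\^ateaux differentiability of $\mathcal{R}$ at $u^\dagger$, with derivative given by \eqref{eq:Rprime}.

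For (ii) I would insert the chain-rule expressions for $f'_u,f'_A$ into \eqref{eq:Rprime} and compare the outcome with the formula for $w'(u^\dagger)$ from Lemma~\ref{thm:wprime}, using the identification of $w$ with the pair $u^*=(F'_u,F'_{\xi_1})$, $v^*=F'_{\xi_2}$ recorded in Remark~\ref{rmk:subgrad}; both $\langle\mathcal{R}'(u^\dagger),\hat u\rangle$ and $\langle w'(u^\dagger),\hat u\rangle$ then reduce to $\int_\Omega F'_u\cdot\hat u\,dx+\int_\Omega F'_{\xi_1}\cdot\nabla\hat u\,dx+\int_\Omega F'_{\xi_2}\cdot T'_2(\nabla u^\dagger)\nabla\hat u\,dx$ and hence agree. (A shorter route: by Lemma~\ref{thm:wsubdif} the generalized Bregman distance $D^{\mathrm{poly}}_w(\cdot;u^\dagger)=\mathcal{R}(\cdot)-w(\cdot)+\mathrm{const}$ is nonnegative and vanishes at $u^\dagger$; by (i) and Lemma~\ref{thm:wprime} it is G\^ateaux differentiable at $u^\dagger$ with derivative $\mathcal{R}'(u^\dagger)-w'(u^\dagger)$; a G\^ateaux differentiable function attaining a global minimum must have vanishing derivative there, forcing $\mathcal{R}'(u^\dagger)=w'(u^\dagger)$.) I expect this to be the crux: the delicate point is verifying that every derivative component lands in the correct dual space so that the formulas of Lemmas~\ref{thm:Rprime} and~\ref{thm:wprime} genuinely apply, the chain rule $f'_A=F'_{\xi_1}+F'_{\xi_2}T'_2$ is literally justified, and the pairings $\langle u^*,\hat u\rangle$ and $\int v^*\cdot T'_2(\nabla u^\dagger)\nabla\hat u$ are well defined — this is where the integrability hypotheses of Lemmas~\ref{thm:wsubdif} and~\ref{thm:Rprime} get consumed.

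For (iii) I would test the variational inequality \eqref{eq:wsourcecon} along rays. Fix $\hat u\in U$ and set $u_t=u^\dagger+t\hat u$. Because $K(u^\dagger)=v^\dagger$ and both $K$ and $\mathcal{R}$ are directionally differentiable at $u^\dagger$, we have $\|K(u_t)-v^\dagger\|\to0$ and $\mathcal{R}(u_t)\to\mathcal{R}(u^\dagger)$ as $t\to0^+$, so $\mathcal{T}_{\bar\alpha}(u_t;v^\dagger)\to\bar\alpha\mathcal{R}(u^\dagger)<\rho$; hence \eqref{eq:wsourcecon} may be applied with $u=u_t$ for all sufficiently small $t>0$. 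Writing $D^{\mathrm{poly}}_w(u_t;u^\dagger)=(\mathcal{R}-w)(u_t)-(\mathcal{R}-w)(u^\dagger)$, dividing \eqref{eq:wsourcecon} by $t>0$, and letting $t\to0^+$ (using $\tfrac1t\|K(u_t)-v^\dagger\|\to\|K'(u^\dagger)\hat u\|$) gives $-\langle w'(u^\dagger),\hat u\rangle\le\beta_1\langle\mathcal{R}'(u^\dagger)-w'(u^\dagger),\hat u\rangle+\beta_2\|K'(u^\dagger)\hat u\|$. By (ii) the first term on the right vanishes, so $-\langle w'(u^\dagger),\hat u\rangle\le\beta_2\|K'(u^\dagger)\hat u\|$; replacing $\hat u$ by $-\hat u$ and combining yields $|\langle w'(u^\dagger),\hat u\rangle|\le\beta_2\|K'(u^\dagger)\hat u\|$ for all $\hat u\in U$. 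Lemma~\ref{thm:dualadjointrange} (with $C=\beta_2>0$) then gives $w'(u^\dagger)\in\mathrm{ran}\,K'(u^\dagger)^\#$, and by (ii) the same holds for $\mathcal{R}'(u^\dagger)$.
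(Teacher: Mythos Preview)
Your proof is correct and follows essentially the same strategy as the paper: both arguments invoke Lemma~\ref{thm:Rprime} for differentiability of $\mathcal{R}$, establish $\mathcal{R}'(u^\dagger)=w'(u^\dagger)$ via the chain rule $f'_A=F'_{\xi_1}+F'_{\xi_2}T'_2$ and the identification $(u^*,v^*)=((F'_u,F'_{\xi_1}),F'_{\xi_2})$ from Remark~\ref{rmk:subgrad}, differentiate the variational inequality \eqref{eq:wsourcecon} along rays $u^\dagger+t\hat u$ (using $\mathcal{T}_{\bar\alpha}(u^\dagger+t\hat u;v^\dagger)\to\bar\alpha\mathcal{R}(u^\dagger)<\rho$), and finish with Lemma~\ref{thm:dualadjointrange}. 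The only cosmetic difference is the order: the paper differentiates the inequality first and proves $\mathcal{R}'(u^\dagger)=w'(u^\dagger)$ second.

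Your parenthetical ``shorter route'' for step~(ii), however, is a genuinely different and more conceptual argument than the paper's explicit computation. Observing that $D^{\mathrm{poly}}_w(\cdot;u^\dagger)=\mathcal{R}-w+\text{const}$ attains its global minimum $0$ at the interior point $u^\dagger$, and that it is G\^ateaux differentiable there with derivative $\mathcal{R}'(u^\dagger)-w'(u^\dagger)$, immediately forces this derivative to vanish. This bypasses the term-by-term chain-rule comparison entirely and would work for any $W_{\mathrm{poly}}$-subgradient $w$, not only the specific one furnished by Lemma~\ref{thm:wsubdif}; the paper's computation, by contrast, exploits the concrete form of that particular subgradient. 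The trade-off is that the paper's explicit calculation makes the structural identity $\mathcal{R}'(u^\dagger)=w'(u^\dagger)$ visible at the level of integrands, whereas your variational argument is slicker but less informative about \emph{why} the two functionals coincide.
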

\begin{proof}
The proof consists of two steps. First, we show that the source condition implies that
\begin{equation}\label{eq:sourcecondif}
0 \le \beta_1 \langle \mathcal{R}'(u^\dagger), \hat u \rangle_{U^*,U} + (1-\beta_1) \langle w'(u^\dagger), \hat u \rangle_{U^*,U} + \beta_2 \|K'(u^\dagger)\hat u\|
\end{equation}
holds for all $\hat u \in U$. Second, the derivatives of $\mathcal{R}$ and $w$ at $u^\dagger$ agree, which leads to
$$ \langle \mathcal{R}'(u^\dagger), \hat u \rangle_{U^*,U} \le \beta_2 \|K'(u^\dagger)\hat u\|$$
for all $\hat u \in U$. Finally, Lemma \ref{thm:dualadjointrange} implies $\mathcal{R}'(u^\dagger) \in \mathrm{ran}\, K'(u^\dagger)^\#$.

Step 1: Inequality \eqref{eq:wsourcecon} can be equivalently written as
$$ 0 \le \beta_1 (\mathcal{R}(u) - \mathcal{R}(u^\dagger)) + (1-\beta_1)(w(u) - w(u^\dagger)) + \beta_2 \|K(u) - K(u^\dagger)\|.$$
Since $\mathcal{R}$ satisfies the requirements of Lemma \ref{thm:wsubdif} as well as inequality \eqref{eq:factcond}, Lemma \ref{thm:Rprime} applies. Now, because of differentiability of both $\mathcal{R}$ and $K$ at $u^\dagger$ and because $\mathcal{T}_{\bar \alpha}(u^\dagger;v^\dagger) < \rho$ by Assumption \ref{ass:wsourcecon}, there is a $t_0>0$ for every $\hat u \in U$ such that $\mathcal{T}_{\bar \alpha}(u^\dagger+t\hat u;v^\dagger) < \rho$ for $0\le t< t_0$. Therefore,
$$ 0 \le \beta_1 (\mathcal{R}(u^\dagger+t\hat u) - \mathcal{R}(u^\dagger)) + (1-\beta_1)(w(u^\dagger+t\hat u) - w(u^\dagger)) + \beta_2 \|K(u^\dagger+t\hat u) - K(u^\dagger)\|.$$
Dividing by $t$ and letting $t\to 0$ yields \eqref{eq:sourcecondif}.

Step 2: We now show that $\mathcal{R}'(u^\dagger)=w'(u^\dagger)$. By Lemma \ref{thm:Rprime} the derivative of $\mathcal{R}$ is given by
\begin{align*}
	\langle \mathcal{R}'(u^\dagger), \hat u \rangle_{U^*,U}
		&= \int_\Omega f'_u(x,u^\dagger,\nabla u^\dagger) \cdot \hat u\,dx  + \int_\Omega  f'_A(x,u^\dagger,\nabla u^\dagger) \cdot \nabla \hat u \,dx. \\
		\intertext{Since $f(x,u,A) = F(x,u,T(A))$, the chain rule yields}
	\langle \mathcal{R}'(u^\dagger), \hat u \rangle_{U^*,U}
		&= \int_\Omega F'_u(x,u^\dagger,T(\nabla u^\dagger)) \cdot \hat u\,dx \\
		&\qquad {}+ \int_\Omega F'_{\xi}(x,u^\dagger,T(\nabla u^\dagger)) \cdot T' (\nabla u^\dagger) \nabla \hat u \,dx. \\
		\intertext{Now we split $F'_{\xi}$ into $(F'_{\xi_1},F'_{\xi_2})$ as in Remark \ref{rmk:subgrad} and, accordingly, $T' (\nabla u^\dagger)$ into $(\mathrm{Id},T_2'(\nabla u^\dagger))$ where $\mathrm{Id}$ is the identity mapping on $\mathbb{R}^{Nn}$. This leads to}
	\langle \mathcal{R}'(u^\dagger), \hat u \rangle_{U^*,U}
		&= \int_\Omega F'_u(x,u^\dagger,T(\nabla u^\dagger)) \cdot \hat u\,dx + \int_\Omega F'_{\xi_1}(x,u^\dagger,T(\nabla u^\dagger)) \cdot \nabla \hat u \,dx \\
		&\qquad {}+ \int_\Omega F'_{\xi_2}(x,u^\dagger,T(\nabla u^\dagger)) \cdot T'_2 (\nabla u^\dagger) \nabla \hat u \,dx.
		%\intertext{Finally, observe that we can rewrite the result in terms of the $W_{\mathrm{poly}}$-subgradient $w=(u^*,v^*)$ provided by Lemma \ref{thm:wsubdif}}
		%&= \langle u^*, \hat u \rangle_{U^*,U} + \int_{\Omega} v^* \cdot T'_2(\nabla u)\nabla \hat u \, dx.
\end{align*}
On the other hand, recall Remark \ref{rmk:subgrad} to see that the $W_\mathrm{poly}$-subgradient $w \in \partial_{\mathrm{poly}}\mathcal{R} (u^\dagger)$ provided by Lemma \ref{thm:wsubdif} is given by
\begin{align*}
	w(u)	&= \underbrace{\int_{\Omega} F'_{u} (x,u^\dagger),T(\nabla u^\dagger)) \cdot u \, dx + \int_{\Omega} F'_{\xi_1} (x,u^\dagger,T(\nabla u^\dagger)) \cdot \nabla u \, dx}_{= \langle u^*, u \rangle_{U^*,U}} \\
			&\quad {}+ \underbrace{\int_{\Omega} F'_{\xi_2} (x,u^\dagger,T(\nabla u^\dagger)) \cdot T_2(\nabla u) \, dx}_{=\langle v^*, T_2(\nabla u) \rangle_{S_2^*,S_2}}.
\end{align*}
Computing the derivative of $w$ according to Lemma \ref{thm:wprime} shows that $\mathcal{R}'(u^\dagger)=w'(u^\dagger)$.
\end{proof}

\begin{rmk}
Theorem \ref{thm:rancon} is an extension of its counterpart from convex regularization theory, Proposition 3.38 in \cite{SchGraGroHalLen09}, in the following sense. If the latter applies to a variational regularization method on $U$ with $\mathcal{R}$ being as in Lemma \ref{thm:Rprime} but convex, then Thm.~\ref{thm:rancon} applies as well with $w\in \partial_\mathrm{poly}\mathcal{R}(u^\dagger)$ and $ D_w^{\mathrm{poly}}(u;u^\dagger)$ reducing to their classical analogues and the respective variational inequalities and range conditions being identical. See also \cite[Remark 4.5]{KirSch17}.
\end{rmk}

\begin{thm}\label{thm:converse}
Assume $K$ is G\^ateaux differentiable at an $\mathcal{R}$-minimizing solution $u^\dagger$ and that $\mathcal{R}$ has a $W_{\mathrm{poly}}$-subgradient $w$ there. In addtion, suppose there is a $\omega^* \in V^*$ as well as constants $\beta_1\in [0,1)$, $\bar{\alpha} > 0$, $\rho > \bar{\alpha}\mathcal{R}(u^\dagger)$ such that
	\begin{equation}\label{eq:wprimerangeK}
		w'(u^\dagger) = K'(u^\dagger)^\# \omega^*,\quad \text{and}
	\end{equation}
	\begin{equation}\label{eq:Kwnonlinearity}
	\begin{aligned}
		\|\omega^*\| & \|K(u) - v^\dagger - K'(u^\dagger)(u-u^\dagger)\| + w(u^\dagger) - w(u) \\
		&\quad {}- \langle w'(u^\dagger),u^\dagger - u\rangle_{U^*,U} \le \beta_1 D_w^{\mathrm{poly}}(u;u^\dagger)
	\end{aligned}
	\end{equation}
for all $u$ satisfying $\mathcal{T}_{\bar{\alpha}}(u;v^\dagger) \le \rho.$ Then Assumption \ref{ass:wsourcecon} holds.
\end{thm}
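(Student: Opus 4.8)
The plan is to verify the variational inequality \eqref{eq:wsourcecon} directly, by a telescoping decomposition of $w(u^\dagger) - w(u)$ in which the range condition \eqref{eq:wprimerangeK} is used to turn the linear term $\langle w'(u^\dagger), u^\dagger - u\rangle_{U^*,U}$ into a pairing with $\omega^*$, after which the remaining piece is absorbed into the nonlinearity bound \eqref{eq:Kwnonlinearity}. Note first that $w'(u^\dagger)$ is a well-defined element of $U^*$ by Lemma \ref{thm:wprime}, and that $K(u^\dagger) = v^\dagger$ since $u^\dagger$ is an $\mathcal{R}$-minimizing solution; hence the quantity $r(u) \coloneqq K(u) - v^\dagger - K'(u^\dagger)(u - u^\dagger)$ occurring in \eqref{eq:Kwnonlinearity} is exactly the linearization remainder of $K$ at $u^\dagger$.

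First I would write
$$w(u^\dagger) - w(u) = \big(w(u^\dagger) - w(u) - \langle w'(u^\dagger), u^\dagger - u\rangle_{U^*,U}\big) + \langle w'(u^\dagger), u^\dagger - u\rangle_{U^*,U},$$
and then, using \eqref{eq:wprimerangeK} together with the defining property of the dual-adjoint, rewrite the last term as
$$\langle w'(u^\dagger), u^\dagger - u\rangle_{U^*,U} = \langle \omega^*, K'(u^\dagger)(u^\dagger - u)\rangle_{V^*,V} = \langle \omega^*, r(u)\rangle_{V^*,V} - \langle \omega^*, K(u) - v^\dagger\rangle_{V^*,V},$$
the second equality being just a rearrangement of the definition of $r(u)$. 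Combining the two displays yields the identity
$$w(u^\dagger) - w(u) = \big(w(u^\dagger) - w(u) - \langle w'(u^\dagger), u^\dagger - u\rangle_{U^*,U} + \langle \omega^*, r(u)\rangle_{V^*,V}\big) - \langle \omega^*, K(u) - v^\dagger\rangle_{V^*,V}.$$

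Next I would estimate the two groups on the right separately. For the parenthesised one, bounding $\langle \omega^*, r(u)\rangle_{V^*,V} \le \|\omega^*\|\,\|r(u)\|$ and then invoking \eqref{eq:Kwnonlinearity} shows it is at most $\beta_1 D_w^{\mathrm{poly}}(u;u^\dagger)$; for the remaining one, $-\langle \omega^*, K(u) - v^\dagger\rangle_{V^*,V} \le \|\omega^*\|\,\|K(u) - v^\dagger\|$. Putting $\beta_2 \coloneqq \|\omega^*\| + 1 > 0$ (the extra $1$ only to keep $\beta_2$ strictly positive in the degenerate case $\omega^* = 0$, as required by Assumption \ref{ass:wsourcecon}), I would then obtain
$$w(u^\dagger) - w(u) \le \beta_1 D_w^{\mathrm{poly}}(u;u^\dagger) + \beta_2 \|K(u) - v^\dagger\|$$
for every $u$ with $\mathcal{T}_{\bar\alpha}(u; v^\dagger) \le \rho$, since \eqref{eq:Kwnonlinearity} and the elementary estimates above are all valid precisely on that sublevel set. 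Together with the constants $\beta_1 \in [0,1)$, $\bar\alpha > 0$, $\rho > \bar\alpha \mathcal{R}(u^\dagger)$ supplied by the hypothesis and the $\beta_2 > 0$ just chosen, this is exactly the content of Assumption \ref{ass:wsourcecon}.

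This argument is essentially bookkeeping, so I do not anticipate a serious obstacle; the only points needing care are getting the sign of the linearization remainder right in the rearrangement of $\langle w'(u^\dagger), u^\dagger - u\rangle_{U^*,U}$, and choosing $\beta_2$ so that it stays strictly positive when $\omega^* = 0$. Condition \eqref{eq:Kwnonlinearity} has been tailored precisely so that the parenthesised term collapses to $\beta_1 D_w^{\mathrm{poly}}(u;u^\dagger)$, which is what makes the proof short, and the analogous statement from convex regularization theory follows the very same pattern.
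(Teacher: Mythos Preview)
Your proof is correct and follows essentially the same route as the paper's: both use the range condition \eqref{eq:wprimerangeK} to convert $\langle w'(u^\dagger), u^\dagger - u\rangle_{U^*,U}$ into a pairing with $\omega^*$, split off the linearization remainder $r(u)$, and then absorb the combined nonlinearity term via \eqref{eq:Kwnonlinearity}. The only cosmetic difference is the order of operations---the paper first bounds $\langle \omega^*, K'(u^\dagger)(u^\dagger-u)\rangle \le \|\omega^*\|\,\|K'(u^\dagger)(u^\dagger-u)\|$ and then applies the triangle inequality, while you split the pairing before estimating---and your observation that one should take $\beta_2 = \|\omega^*\| + 1$ rather than $\beta_2 = \|\omega^*\|$ to guarantee $\beta_2 > 0$ when $\omega^* = 0$ is a small refinement the paper omits.
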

\begin{proof}
The proof is along the lines of \cite[Prop.~3.35]{SchGraGroHalLen09}. We include it here in order to clarify the main differences.

By virtue of \eqref{eq:wprimerangeK} we have for every $u\in U$
	\begin{align*}
		\langle w'(u^\dagger), u^\dagger - u \rangle_{U^*,U}
			&= \langle K'(u^\dagger)^\# \omega^*, u^\dagger - u \rangle_{U^*,U} \\
			&= \langle \omega^*, K'(u^\dagger)(u^\dagger - u) \rangle_{U^*,U} \\
			&= \| \omega^* \| \| K'(u^\dagger)(u^\dagger - u)\| \\
			&\le \| \omega^* \| \| K(u) - v^\dagger\| + \| \omega^* \| \|K(u) - v^\dagger -  K'(u^\dagger)(u-u^\dagger)\|.
	\end{align*}
Adding $w(u^\dagger) - w(u) - \langle w'(u^\dagger),u^\dagger - u\rangle_{U^*,U}$ on both sides and using \eqref{eq:Kwnonlinearity} we arrive at
$$ w(u^\dagger) - w(u) \le \| \omega^* \| \| K(u) - v^\dagger\| + \beta_1 D_w^{\mathrm{poly}}(u;u^\dagger),$$
which is is just \eqref{eq:wsourcecon} with $\beta_2 = \| \omega^* \|$.
\end{proof}
\begin{rmk}
Note that the expression
\begin{equation}\label{eq:wnonlin}
	w(u^\dagger) - w(u) - \langle w'(u^\dagger),u^\dagger - u\rangle_{U^*,U}
\end{equation}
in \eqref{eq:Kwnonlinearity} is just the difference between $w$ and its continuous affine approximation around $u^\dagger.$ Therefore, condition \eqref{eq:Kwnonlinearity} is essentially a restriction on the nonlinearity of $K$ plus the nonlinearity of $w$, both computed in a neighbourhood of $u^\dagger.$

Theorem \ref{thm:converse} extends \cite[Prop.~3.35]{SchGraGroHalLen09} in the same way Theorem \ref{thm:rancon} extends \cite[Prop.~3.38]{SchGraGroHalLen09}. If $w = (u^*,0)$, then $w'(u^\dagger) = u^*$ and the nonlinearity \eqref{eq:wnonlin} vanishes.
\end{rmk}

\section{Conclusion}
In recent years, several authors have shown that nonconvex regularization of inverse problems is not only a viable possibility, but can even be preferable to convex regularization in certain situations, see for instance \cite{BreLor09,BurModRut13,Gra10b,IglRumSch17,KirSch17,Zar09}. However, convergence rates results for nonconvex regularization are exceedingly rare, let alone results relating different types of source conditions.

In this paper we have shown that two such results can be translated to the polyconvex setting of \cite{KirSch17}. The first one states that, under suitable differentiablity assumptions, source conditions in the form of variational inequalities imply range conditions. One of the reasons why this statement remains true is the fact that the derivative of $\mathcal{R}$ is equal to the derivative of its $W_\mathrm{poly}$-subgradient. This fact can be interpreted as a generalization of the well-known identity $\partial \mathcal{R}(u) = \{\mathcal{R}'(u)\}$ for convex and differentiable functions $\mathcal{R}$. Second, we have demonstrated that a converse statement can be obtained as well, given that the sum of the nonlinearities of $K$ and of the $W_\mathrm{poly}$-subgradient can be bounded by the $W_\mathrm{poly}$-Bregman distance around $u^\dagger$.

\subsection*{Acknowledgements}
Both authors acknowledge support by the Austrian Science Fund (FWF): S117. In addition, the work of OS is supported by the FWF Sonderforschungsbereich (SFB) F 68, as well as by project I 3661, jointly funded by FWF and Deutsche Forschungsgemeinschaft (DFG).

\def\cprime{$'$} \providecommand{\noopsort}[1]{}

\end{document}